\documentclass[12pt,reqno]{amsart}

\setlength{\textheight}{23cm}
\setlength{\textwidth}{16cm}
\setlength{\topmargin}{-0.8cm}
\setlength{\parskip}{0.3\baselineskip}
\hoffset=-1.4cm

\newtheorem{theorem}{Theorem}[section]
\newtheorem{proposition}[theorem]{Proposition}
\newtheorem{lemma}[theorem]{Lemma}
\newtheorem{corollary}[theorem]{Corollary}

\numberwithin{equation}{section}

\begin{document}

\baselineskip=16pt

\title[Abel-Jacobi map and curvature of the pulled back metric]{Abel-Jacobi map and
curvature of the pulled back metric}

\author[I. Biswas]{Indranil Biswas}

\address{School of Mathematics, Tata Institute of Fundamental
Research, Homi Bhabha Road, Mumbai 400005, India}

\email{indranil@math.tifr.res.in}

\subjclass[2000]{14C20, 32Q10}

\keywords{Gonality, curvature, symmetric product, Abel-Jacobi map.}

\date{}

\begin{abstract}
Let $X$ be a compact connected Riemann surface of genus at least two. The Abel-Jacobi
map $\varphi\, :\, {\rm Sym}^d(X)\, \longrightarrow\, {\rm Pic}^d(X)$ is an embedding if $d$ is less
than the gonality of $X$. We investigate the curvature of the pull-back, by $\varphi$, of the flat metric
on ${\rm Pic}^d(X)$. In particular, we show that when $d\,=\,1$, the curvature is strictly negative
everywhere if $X$ is not hyperelliptic, and when $X$ is hyperelliptic, the curvature is nonpositive
with vanishing exactly on the points of $X$ fixed by the hyperelliptic involution.
\end{abstract}

\maketitle

\section{Introduction}\label{intro.}

Let $X$ be a compact connected Riemann surface of genus $g$, with $g\, \geq\, 2$. The gonality of $X$ is defined to be
the smallest integer $\gamma_X$ such that there is a nonconstant holomorphic map from $X$ to
${\mathbb C}{\mathbb P}^1$ of degree $\gamma_X$. Consider the Abel-Jacobi
map $$\varphi\, :\, {\rm Sym}^d(X)\, \longrightarrow\, {\rm Pic}^d(X)$$ that sends an effective divisor $D$ on $X$ of degree $d$
to the corresponding
holomorphic line bundle ${\mathcal O}_X(D)$. If $d\, <\, \gamma_X$, then $\varphi$ is an embedding (Lemma
\ref{lem1}). On the other hand, ${\rm Pic}^d(X)$ is equipped with a flat K\"ahler form, which we will denote by $\omega_0$.
So, $\varphi^*\omega_0$ is a K\"ahler form on ${\rm Sym}^d(X)$, whenever $d\, <\, \gamma_X$.
The K\"ahler metric $\varphi^*\omega_0$ on ${\rm Sym}^d(X)$ is relevant in the study of abelian
vortices (see \cite{Ri}, \cite{BR} and references therein).

Our aim here is to study the curvature of this K\"ahler form $\varphi^*\omega_0$ on ${\rm Sym}^d(X)$.

Consider the $g$--dimensional vector space $H^0(X,\, K_X)$ consisting
of holomorphic one-forms on $X$. It is equipped with a natural
Hermitian structure. Let
$$
{\mathbb G}\,=\, {\rm Gr}(d, H^0(X,K_X))
$$
be the Grassmannian parametrizing all $d$ dimensional quotients of $H^0(X,\, K_X)$. The Hermitian structure on
$H^0(X,\, K_X)$ produces a Hermitian structure on the tautological
vector bundle on ${\mathbb G}$ of rank $d$; this
tautological bundle on ${\mathbb G}$ of rank $d$ will be denoted by $V$. The Hermitian structure on
$H^0(X,\, K_X)$ also gives a Fubini--Study K\"ahler form on ${\mathbb G}$.

There is a natural holomorphic map
$$
\rho\, :\, \text{Sym}^d(X)\,\longrightarrow\, {\mathbb G}
$$
(see \eqref{e8}). We prove that the holomorphic Hermitian vector bundle $\rho^*V\, \longrightarrow\,
{\rm Sym}^d(X)$ is isomorphic to the holomorphic cotangent bundle $\Omega_{{\rm Sym}^d(X)}$ equipped with the
Hermitian structure given by $\varphi^*\omega_0$ (Theorem \ref{thm1}).

Since the curvature of the holomorphic Hermitian vector bundle $V\, \longrightarrow\, {\mathbb 
G}$ is standard, Theorem \ref{thm1} gives a description of the curvature of $\varphi^*\omega_0$ 
in terms of $\rho$. In particular, we show that when $d\,=\, 1$, the curvature of 
$\varphi^*\omega_0$ is strictly negative if $X$ is not hyperelliptic; on the other hand, if $X$ 
is hyperelliptic, then the curvature of $\varphi^*\omega_0$ vanishes at the $2(g+1)$ points of 
$X$ fixed by the hyperelliptic involution; the curvature of $\varphi^*\omega_0$ is strictly 
negative outside these $2(g+1)$ points (Proposition \ref{prop1}).

\section{Gonality and flat metric}

As before, $X$ is a compact connected Riemann surface of genus $g$, with $g\, \geq\, 2$. For any 
positive integer $d$, let $\text{Sym}^d(X)$ denote the quotient of the Cartesian product $X^d$ 
under the natural action of the group of permutations of $\{1,\, \cdots, \, n\}$. This 
$\text{Sym}^d(X)$ is a smooth complex projective manifold of dimension $d$. The component of the 
Picard group of $X$ parametrizing the holomorphic line bundles of degree $d$ will be denoted by 
$\text{Pic}^d(X)$.

Let $X^d\, \longrightarrow\, \text{Pic}^d(X)$ be the map that sends any $(x_1,\, \cdots,\, x_d)
\,\in\, X^d$ to the line bundle ${\mathcal O}_X(x_1+\ldots +x_d)$. It descends to a map
\begin{equation}\label{e1}
\varphi\, :\, \text{Sym}^d(X)\, \longrightarrow\, \text{Pic}^d(X)\, .
\end{equation}
This map $\varphi$ is surjective if and only if $d\, \geq\, g$.

Consider the space of all nonconstant holomorphic maps from $X$ to the complex projective line
${\mathbb C}{\mathbb P}^1$. More precisely, consider the degree of all such maps. Since $g\, \geq\, 2$, the
degree of any such map is at least two.
The {\it gonality} of $X$ is defined to the smallest integer among the degrees of maps in this space
\cite[p.~171]{Ei}. Equivalently, the gonality of $X$ is the smallest one among the degrees of holomorphic
line bundles $L$ on $X$ with $\dim H^0(X,\, L)\, \geq\, 2$. The gonality of $X$ will be denoted by
$\gamma_X$. Note that $\gamma_X\,=\, 2$ if and only if $X$ is hyperelliptic. 
The gonality of a generic compact Riemann surface of genus $g$ is $\left \lfloor{\frac{g+3}{2}}\right \rfloor$.

\begin{lemma}\label{lem1}
Assume that $d\, < \, \gamma_X$. Then the map $\varphi$ in \eqref{e1} is an embedding.
\end{lemma}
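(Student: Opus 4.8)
The plan is to show that $\varphi$ is an injective immersion; since $\text{Sym}^d(X)$ is compact, such a map is automatically a closed embedding. Both the injectivity and the immersion property will be extracted from the characterization of the gonality recalled above, namely that $\gamma_X$ is the smallest degree of a line bundle $L$ on $X$ with $\dim H^0(X,\, L)\, \geq\, 2$. Combined with the hypothesis $d\, <\, \gamma_X$, this forces
$$\dim H^0(X,\, {\mathcal O}_X(D))\,=\, 1$$
for every effective divisor $D$ on $X$ of degree $d$, and this single consequence drives the whole argument.

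For injectivity, suppose $D$ and $D'$ are effective divisors of degree $d$ with $\varphi(D)\,=\,\varphi(D')$, i.e., ${\mathcal O}_X(D)\,\cong\, {\mathcal O}_X(D')$. Then $D$ and $D'$ are linearly equivalent, so both belong to the complete linear system $|{\mathcal O}_X(D)|$. Since $\deg {\mathcal O}_X(D)\,=\, d\, <\, \gamma_X$, the vanishing $\dim H^0(X,\, {\mathcal O}_X(D))\,=\,1$ shows that $|{\mathcal O}_X(D)|$ consists of a single point, whence $D\,=\,D'$.

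For the immersion property, fix an effective divisor $D$ of degree $d$, regarded as a length-$d$ subscheme of $X$. Standard deformation theory identifies $T_D\,\text{Sym}^d(X)$ with $H^0(X,\, {\mathcal O}_D(D))$ (the global sections of the normal sheaf of $D$) and $T_{\varphi(D)}\,\text{Pic}^d(X)$ with $H^1(X,\, {\mathcal O}_X)$, and under these identifications the differential $d\varphi_D$ becomes the connecting homomorphism $H^0(X,\, {\mathcal O}_D(D))\, \longrightarrow\, H^1(X,\, {\mathcal O}_X)$ in the long exact cohomology sequence of
$$0\, \longrightarrow\, {\mathcal O}_X\, \longrightarrow\, {\mathcal O}_X(D)\, \longrightarrow\, {\mathcal O}_D(D)\, \longrightarrow\, 0\, .$$
The kernel of this connecting map is the image of $H^0(X,\, {\mathcal O}_X(D))\, \longrightarrow\, H^0(X,\, {\mathcal O}_D(D))$, which coincides with the quotient $H^0(X,\, {\mathcal O}_X(D))/H^0(X,\, {\mathcal O}_X)$. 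Since $d\, <\, \gamma_X$ gives $\dim H^0(X,\, {\mathcal O}_X(D))\,=\, 1\,=\, \dim H^0(X,\, {\mathcal O}_X)$, this quotient is zero, so $d\varphi_D$ is injective. As $D$ was arbitrary, $\varphi$ is an immersion, and the lemma follows.

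I expect the step needing the most care to be the identification of $d\varphi_D$ with the connecting homomorphism at \emph{non-reduced} divisors $D$. For a reduced divisor $D\,=\, x_1+\cdots+x_d$ with distinct points this is the familiar description of the differential of the Abel-Jacobi map sending $\sum_i v_i$, with $v_i\,\in\, T_{x_i}X$, to the class in $H^1(X,\, {\mathcal O}_X)$ of the Čech cocycle with principal parts determined by the $v_i$; the scheme-theoretic formulation above handles all $D\,\in\,\text{Sym}^d(X)$ uniformly. For this, and for the identification of the tangent spaces, I would simply cite the standard references on the geometry of symmetric products and Jacobians (e.g., Arbarello--Cornalba--Griffiths--Harris).
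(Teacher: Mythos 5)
Your proof is correct and follows essentially the same route as the paper: injectivity comes from $\dim H^0(X,\mathcal{O}_X(D))=1$ (forced by $d<\gamma_X$), and the differential is identified with the connecting homomorphism of $0\to\mathcal{O}_X\to\mathcal{O}_X(D)\to\mathcal{O}_D(D)\to 0$, whose injectivity follows from the same dimension count — this is exactly the paper's sequence \eqref{c1} and map $\delta_{\mathbf x}$. The only cosmetic differences are that you phrase injectivity via the linear system $|D|$ being a point rather than via a degree-$d$ meromorphic function, and that you make explicit the (implicit in the paper) step that an injective immersion of a compact manifold is an embedding.
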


\begin{proof}
We will first show that $\varphi$ is injective.

Take any point ${\mathbf x} \, :=\,\{x_1,\, \cdots\, , x_d\}\, \in\, \text{Sym}^d(X)$; the points
$x_i$ need not be distinct. The divisor $\sum_{i=1}^d x_i$ will be denoted by $D_{\mathbf x}$.
If ${\mathbf y} \, :=\,\{y_1,\, \cdots\, , y_d\}\, \in\, \text{Sym}^d(X)$ is another point
such that the line bundles ${\mathcal O}_X(D_{\mathbf x})$ and ${\mathcal O}_X(D_{\mathbf y})$ are isomorphic,
where $D_{\mathbf y}\,=\, \sum_{i=1}^d y_i$, then there is a meromorphic function on $X$ with
pole divisor $D_{\mathbf y}$ and zero divisor $D_{\mathbf x}$. In particular, the degree of this
meromorphic function is $d$. But this contradicts the given condition
that $d\, < \, \gamma_X$. Consequently, the map $\varphi$ is injective.

We need to show that for ${\mathbf x}\, \in\, \text{Sym}^d(X)$, the differential
\begin{equation}\label{e2}
d\varphi({\mathbf x})\, :\, T_{\mathbf x}\text{Sym}^d(X)\, \longrightarrow\, T_{\varphi({\mathbf x})}\text{Pic}^d(X)
\,=\, H^1(X,\, {\mathcal O}_X)
\end{equation}
is injective.

We will quickly recall a description of the tangent bundle $T\text{Sym}^d(X)$.

Let
$$
{\mathcal D}\, \subset\, \text{Sym}^d(X)\times X
$$
be the tautological reduced effective divisor consisting of all $(\{y_1,\, \cdots\, , y_d\},\, y)$ such that
$y\, \in\, \{y_1,\, \cdots\, , y_d\}$. The projection of $\text{Sym}^d(X)\times X$ to $\text{Sym}^d(X)$
will be denoted by $p$. Consider the quotient sheaf
$$
{\mathcal O}_{\text{Sym}^d(X)\times X}({\mathcal D})/{\mathcal O}_{\text{Sym}^d(X)\times X}\, \longrightarrow\,
\text{Sym}^d(X)\times X\, .
$$
Note that its support is the divisor
$\mathcal D$. The tangent bundle $T\text{Sym}^d(X)$ is the direct image
$$
p_*({\mathcal O}_{\text{Sym}^d(X)\times X}({\mathcal D})/{\mathcal O}_{\text{Sym}^d(X)\times X})\, \longrightarrow\,
\text{Sym}^d(X)\, .
$$

Take any ${\mathbf x} \, :=\,\{x_1,\, \cdots\, , x_d\}\, \in\, \text{Sym}^d(X)$. Let
\begin{equation}\label{e3}
0\, \longrightarrow\, {\mathcal O}_X(-D_{\mathbf x})\, \longrightarrow\, {\mathcal O}_X
\, \longrightarrow\, \widetilde{Q}({\mathbf x})\,:=\, {\mathcal O}_X/{\mathcal O}_X(-D_{\mathbf x})
\, \longrightarrow\, 0
\end{equation}
be the short exact sequence of sheaves on $X$, where $D_{\mathbf x}$, as before, is the effective divisor
given by ${\mathbf x}$. Tensoring the sequence in \eqref{e3}
with the line bundle ${\mathcal O}_X(-D_{\mathbf x})^*\,=\, {\mathcal O}_X(D_{\mathbf x})$ we obtain the
following short exact sequence of sheaves on $X$:
\begin{equation}\label{c1}
0\, \longrightarrow\, End({\mathcal O}_X(-D_{\mathbf x}))\,=\, {\mathcal O}_X \, \longrightarrow\,
Hom({\mathcal O}_X(-D_{\mathbf x}),\, {\mathcal O}_X)
\end{equation}
$$
=\, {\mathcal O}_X(D_{\mathbf x})\, \longrightarrow\, Q({\mathbf x})\,:=\,
Hom({\mathcal O}_X(-D_{\mathbf x})\, ,\widetilde{Q}({\mathbf x}))\, \longrightarrow\, 0\, .
$$
Let
\begin{equation}\label{e4}
0\, \longrightarrow\, H^0(X,\, {\mathcal O}_X)\,\stackrel{\alpha}{\longrightarrow}\,
H^0(X,\, {\mathcal O}_X(D_{\mathbf x}))\,\stackrel{\beta}{\longrightarrow}\, H^0(X,\,
Q(\mathbf{x}))\,\stackrel{\delta_{\mathbf x}}{\longrightarrow}\, H^1(X,\, {\mathcal O}_X)
\end{equation}
$$
\stackrel{\nu}{\longrightarrow}\, H^1(X,\, {\mathcal O}_X(D_{\mathbf x}))
\, \longrightarrow\, H^1(X,\, Q(\mathbf{x}))\,=\,0
$$
be the long exact sequence of cohomologies associated to the short exact sequence
of sheaves in \eqref{e4}. From the earlier description of $T\text{Sym}^d(X)$ we have
the following:
\begin{equation}\label{y1}
T_{\mathbf x}\text{Sym}^d(X)\,=\, H^0(X,\, Q({\mathbf x}))\, .
\end{equation}

Since $d\, < \, \gamma_X$, we have
\begin{equation}\label{z1}
H^0(X,\, {\mathcal O}_X(D_{\mathbf x}))\,=\, \mathbb C\, .
\end{equation}
Hence the homomorphism
$\alpha$ in \eqref{e4} is an isomorphism. This implies that the homomorphism
$\delta_{\mathbf x}$ in \eqref{e4} is injective. So
the exact sequence in \eqref{e4} gives the exact sequence
\begin{equation}\label{e5}
0 \,\longrightarrow\, T_{\mathbf x}\text{Sym}^d(X)\,=\, H^0(X,\,
Q(\mathbf{x}))\,\stackrel{\delta_{\mathbf x}}{\longrightarrow}\, H^1(X,\, {\mathcal O}_X)\, .
\end{equation}

The tangent bundle of $\text{Pic}^d(X)$ is the trivial vector bundle over $\text{Pic}^d(X)$
with fiber $H^1(X,\, {\mathcal O}_X)$. The differential
$d\varphi({\mathbf x})$ in \eqref{e2} coincides with the homomorphism $\delta_{\mathbf x}$
in \eqref{e4}. Since $\delta_{\mathbf x}$ in \eqref{e5} is injective, it
follows that $d\varphi({\mathbf x})$ is injective.
\end{proof}

Let $K_X$ denote the holomorphic cotangent bundle of $X$. The vector space $H^0(X,\, K_X)$
is equipped with the Hermitian form
\begin{equation}\label{z2}
\langle\theta_1,\, \theta_2\rangle \,:=\, \int_X \theta_1\wedge \overline{\theta_2}\,\in\, {\mathbb C}\, , \ \
\theta_1,\, \theta_2\, \in\, H^0(X,\, K_X)\, .
\end{equation}
This Hermitian form on $H^0(X,\, K_X)$ produces a Hermitian form on the dual vector space
$$H^0(X,\, K_X)^*\,=\, H^1(X,\, {\mathcal O}_X)\, ;$$ this isomorphism is given by Serre duality.
This Hermitian form on $H^1(X,\, {\mathcal O}_X)$ produces a K\"ahler
structure on $\text{Pic}^d(X)$ which is invariant under the translation action of
$\text{Pic}^0(X)$ on $\text{Pic}^d(X)$. This K\"ahler structure on $\text{Pic}^d(X)$ will
be denoted by $\omega_0$.

Now Lemma \ref{lem1} has the following corollary:

\begin{corollary}\label{cor1}
Assume that $d\, < \, \gamma_X$. Then $\varphi^*\omega_0$ is a K\"ahler structure on 
${\rm Sym}^d(X)$.
\end{corollary}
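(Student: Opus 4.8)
The plan is to deduce this directly from Lemma \ref{lem1} together with the elementary fact that the pullback of a K\"ahler form under a holomorphic immersion is again a K\"ahler form. First I would recall that, by construction, $\omega_0$ is a closed positive $(1,1)$--form on $\text{Pic}^d(X)$. Since $\varphi$ is holomorphic, $\varphi^*\omega_0$ is a $(1,1)$--form on $\text{Sym}^d(X)$, and since exterior differentiation commutes with pullback, $d(\varphi^*\omega_0)\,=\, \varphi^*(d\omega_0)\,=\, 0$, so $\varphi^*\omega_0$ is closed.

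Next I would check positivity pointwise. Fix ${\mathbf x}\, \in\, \text{Sym}^d(X)$ and a nonzero tangent vector $v\, \in\, T_{\mathbf x}\text{Sym}^d(X)$. By Lemma \ref{lem1}, the differential $d\varphi({\mathbf x})$ is injective, so $d\varphi({\mathbf x})(v)\, \neq\, 0$ in $T_{\varphi({\mathbf x})}\text{Pic}^d(X)$. Since $\omega_0$ is positive, the evaluation of $\varphi^*\omega_0$ on $v$ reduces to the evaluation of $\omega_0$ on the nonzero vector $d\varphi({\mathbf x})(v)$, which is strictly positive. Hence $\varphi^*\omega_0$ is a positive $(1,1)$--form, and being also closed, it defines a K\"ahler structure on $\text{Sym}^d(X)$.

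There is essentially no obstacle here: the statement is a formal corollary of Lemma \ref{lem1}, and the only point to be careful about is that it is the injectivity of the differential $d\varphi$ (not merely the injectivity of $\varphi$ itself) that guarantees nondegeneracy of the pullback form --- but this is exactly the content of the second half of the proof of Lemma \ref{lem1}. Equivalently, one may phrase the argument in terms of the associated Hermitian metric: $\varphi^*\omega_0$ corresponds to the Hermitian metric on $T\text{Sym}^d(X)$ obtained by restricting, via $d\varphi$, the flat Hermitian metric on $T\text{Pic}^d(X)\,=\, H^1(X,\, {\mathcal O}_X)$ determined by the form in \eqref{z2}, and injectivity of $d\varphi$ makes this restricted metric positive definite.
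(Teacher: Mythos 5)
Your argument is correct and is exactly what the paper intends: the corollary is stated without proof as an immediate consequence of Lemma \ref{lem1}, the point being that a holomorphic immersion pulls back a K\"ahler form to a K\"ahler form (closedness from naturality of $d$, positivity from injectivity of $d\varphi$). Your write-up just makes this explicit.
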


\section{Mapping to a Grassmannian}

We will always assume that $d\, < \, \gamma_X$. Since $\gamma_X\, \leq\, g$, we have
$d\, <\, g$.

Let
\begin{equation}\label{e6}
{\mathbb G}\,=\, {\rm Gr}(d, H^0(X,K_X))
\end{equation}
be the Grassmannian parametrizing all $d$ dimensional quotients of $H^0(X,\, K_X)$. Let
\begin{equation}\label{z3}
V\, \longrightarrow\, {\mathbb G}
\end{equation}
be the tautological vector bundle of rank $d$. So $V$ is a quotient of the trivial vector
bundle ${\mathbb G}\times H^0(X,\, K_X)\, \longrightarrow\, {\mathbb G}$. Consider the
Hermitian form on $H^0(X,\, K_X)$ defined in \eqref{z2}. It produces a Hermitian structure on
the trivial vector bundle ${\mathbb G}\times H^0(X,\, K_X)\, \longrightarrow\, {\mathbb G}$.
Identifying the quotient $V$ with the orthogonal complement of the kernel
of the projection to $V$, we get a
Hermitian structure on $V$. Let
\begin{equation}\label{z4}
H_0\, :\, V\otimes \overline{V}\, \longrightarrow\, {\mathbb G}\times \mathbb C
\end{equation}\
be this Hermitian structure on $V$.

Take any ${\mathbf x} \, :=\,\{x_1,\, \cdots\, , x_d\}\, \in\, \text{Sym}^d(X)$.
Consider the short exact sequence of sheaves on $X$
\begin{equation}\label{c2}
0\, \longrightarrow\, K_X\otimes {\mathcal O}_X(-D_{\mathbf x})\, \longrightarrow\, K_X
\, \longrightarrow\, Q'({\mathbf x})\,:=\, K_X/(K_X\otimes {\mathcal O}_X(-D_{\mathbf x}))
\, \longrightarrow\, 0\, ,
\end{equation}
where $D_{\mathbf x}$ as before is the divisor on $X$ given by $\mathbf x$. Let
\begin{equation}\label{e7}
0\, \longrightarrow\, H^0(X,\, K_X\otimes {\mathcal O}_X(-D_{\mathbf x}))
\,\stackrel{\nu'}{\longrightarrow}\, H^0(X,\, K_X)\,\stackrel{\delta'_{\mathbf x}}{\longrightarrow}\, H^0(X,\, Q'({\mathbf x}))
\end{equation}
$$
\,\stackrel{\beta'}{\longrightarrow}\, H^1(X,\, K_X\otimes {\mathcal O}_X(-D_{\mathbf x}))
\, \stackrel{\alpha'}{\longrightarrow}\, H^1(X,\, K_X) \,\longrightarrow\, H^1(X,\, Q'({\mathbf x}))\,=\, 0
$$
be the long exact sequence of cohomologies associated to it. By Serre duality,
$$
H^1(X,\, K_X\otimes {\mathcal O}_X(-D_{\mathbf x}))\,=\, H^0(X,\, {\mathcal O}_X(D_{\mathbf x}))^*\, ;
$$
hence from \eqref{z1} it follows that $\alpha'$ in \eqref{e7} is an isomorphism. This implies that
$\beta'$ in \eqref{e7} is the zero homomorphism, hence $\delta'_{\mathbf x}$ is surjective. In other words,
$H^0(X,\, Q'({\mathbf x}))$ is a quotient of
$H^0(X,\, K_X)$ of dimension $d$. Therefore, $H^0(X,\, Q'({\mathbf x}))$ gives a point of
the Grassmannian ${\mathbb G}$ constructed in \eqref{e6}.

Let
\begin{equation}\label{e8}
\rho\, :\, \text{Sym}^d(X)\,\longrightarrow\, {\mathbb G}
\end{equation}
be the morphism defined by ${\mathbf x} \,\longmapsto\, H^0(X,\, Q'({\mathbf x}))$.

\begin{theorem}\label{thm1}\mbox{}
\begin{enumerate}
\item The vector bundle $\rho^*V\,\longrightarrow\,{\rm Sym}^d(X)$, where $V$ and $\rho$ are constructed in
\eqref{z3} and \eqref{e8} respectively, is holomorphically identified with the holomorphic cotangent bundle
$\Omega_{{\rm Sym}^d(X)}$.

\item Using the identification in (1), the Hermitian structure $\rho^*H_0$, where $H_0$ is constructed in \eqref{z4}, coincides
with the Hermitian structure on $\Omega_{{\rm Sym}^d(X)}$ given by $\varphi^*\omega_0$ in Corollary \ref{cor1}.
\end{enumerate}
\end{theorem}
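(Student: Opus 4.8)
The plan is to prove both statements by playing the cohomology sequence \eqref{e4} against the cohomology sequence \eqref{e7}; the key observation is that the underlying sheaf sequences \eqref{c1} and \eqref{c2} are Serre dual to each other on $X$.

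For part (1), I would start from the fact that \eqref{c1} is the short exact sequence $0\to\mathcal{O}_X\to\mathcal{O}_X(D_{\mathbf x})\to Q({\mathbf x})\to 0$, and that applying $Hom(-,K_X)$ (i.e.\ $\mathcal{H}om$) to it produces, because $\mathcal{H}om(Q({\mathbf x}),K_X)=0$ and $\mathcal{E}xt^1(\mathcal{O}_X(D_{\mathbf x}),K_X)=0$, precisely the sequence \eqref{c2}; in particular $Q'({\mathbf x})\cong\mathcal{E}xt^1(Q({\mathbf x}),K_X)$. Since $Q({\mathbf x})$ is a torsion sheaf, $\mathrm{Ext}^1_X(Q({\mathbf x}),K_X)=H^0(X,\mathcal{E}xt^1(Q({\mathbf x}),K_X))=H^0(X,Q'({\mathbf x}))$, while Serre duality on $X$ gives $\mathrm{Ext}^1_X(Q({\mathbf x}),K_X)=H^0(X,Q({\mathbf x}))^*$. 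Combining this with \eqref{y1},
\[
(\rho^*V)_{\mathbf x}\;=\;H^0(X,Q'({\mathbf x}))\;\cong\;H^0(X,Q({\mathbf x}))^*\;=\;\big(T_{\mathbf x}\text{Sym}^d(X)\big)^*\;=\;\big(\Omega_{\text{Sym}^d(X)}\big)_{\mathbf x}\,.
\]
To upgrade this fibrewise isomorphism to a holomorphic isomorphism of vector bundles I would redo the same computation relatively over $\text{Sym}^d(X)\times X$, with the tautological sheaves, using relative Serre duality for the projection $p$ to $\text{Sym}^d(X)$; this identifies $\rho^*V$ (the direct image of the relative $Q'$) with the dual of the direct image of the relative $Q$, and the latter is $T\text{Sym}^d(X)$ by the description of the tangent bundle recalled in the proof of Lemma \ref{lem1}.

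For part (2), I would make both Hermitian forms explicit in terms of \eqref{z2}. Write $E=H^0(X,K_X)$ with the form $\langle\,,\,\rangle$ of \eqref{z2}; then $E^*=H^1(X,\mathcal{O}_X)$ carries the Serre-dual form, which is by definition the form producing $\omega_0$. Put $A_{\mathbf x}=H^0(X,K_X\otimes\mathcal{O}_X(-D_{\mathbf x}))\subset E$. By \eqref{e7} this is exactly $\ker\delta'_{\mathbf x}$, and --- by a residue computation for the connecting map $\delta_{\mathbf x}$ together with the Riemann--Roch count $\dim A_{\mathbf x}+\dim\mathrm{image}(\delta_{\mathbf x})=g$, or equivalently from the very Serre duality of \eqref{c1} and \eqref{c2} used above --- it is also the annihilator, for the Serre pairing, of $T_{\mathbf x}\text{Sym}^d(X)=\mathrm{image}(\delta_{\mathbf x})\subset E^*$ (viewed inside $E^*$ via $d\varphi({\mathbf x})=\delta_{\mathbf x}$). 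Now, by the definition of $H_0$ in \eqref{z4}, the form $\rho^*H_0$ at ${\mathbf x}$ is the restriction of $\langle\,,\,\rangle$ to the orthogonal complement $A_{\mathbf x}^{\perp}\subset E$, transported to $H^0(X,Q'({\mathbf x}))=E/A_{\mathbf x}$ along $\delta'_{\mathbf x}$; and $\varphi^*\omega_0$ induces on $\big(\Omega_{\text{Sym}^d(X)}\big)_{\mathbf x}=\big(T_{\mathbf x}\text{Sym}^d(X)\big)^*$ the dual of the restriction of the Serre-dual form to $T_{\mathbf x}\text{Sym}^d(X)$. Choosing a $\langle\,,\,\rangle$-orthonormal basis of $E$ adapted to the orthogonal splitting $E=A_{\mathbf x}\oplus A_{\mathbf x}^{\perp}$ together with the corresponding Serre-dual basis of $E^*$, one checks that the reductions modulo $A_{\mathbf x}$ of the $A_{\mathbf x}^{\perp}$-part of this basis form an orthonormal basis both for $\rho^*H_0$ and --- via the identification of part (1), which sends $\theta\bmod A_{\mathbf x}$ to the functional $t\mapsto\langle\theta,t\rangle$ on $T_{\mathbf x}\text{Sym}^d(X)$ --- for the form induced by $\varphi^*\omega_0$. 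Two Hermitian forms with a common orthonormal basis are equal, so the two structures coincide.

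I expect the geometric content, part (1), to be short once the Serre duality between \eqref{c1} and \eqref{c2} is noticed. The main obstacle is the bookkeeping in part (2): three dualities intervene --- Serre duality on $X$, the convention ``quotient $=$ orthogonal complement of the kernel'' defining the metric on the tautological bundle $V$, and the passage from a metric on $T\text{Sym}^d(X)$ to the dual metric on $\Omega_{\text{Sym}^d(X)}$ --- and one has to verify that they are mutually compatible, so that the isomorphism of part (1) is genuinely an isometry and not twisted by an extra conjugation or a positive factor. I would also be careful to set up part (1) with the relative sheaves on $\text{Sym}^d(X)\times X$, so that the asserted isomorphism of holomorphic vector bundles, and not merely the fibrewise isomorphism of vector spaces, is actually proved.
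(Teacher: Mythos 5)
Your proposal is correct and follows essentially the same route as the paper: both rest on the observation that \eqref{c2} is the Serre dual of \eqref{c1}, so the long exact sequences \eqref{e4} and \eqref{e7} are dual to one another, giving $H^0(X,Q'(\mathbf{x}))\cong H^0(X,Q(\mathbf{x}))^*$, and the isometry then comes from the fact that the Hermitian form on $H^1(X,\mathcal{O}_X)$ is by definition the dual of the form \eqref{z2} on $H^0(X,K_X)$. Your write-up is in fact more careful than the paper's at two points it leaves implicit --- the orthonormal-basis verification that the three dualities (Serre, ``quotient $=$ orthogonal complement of the kernel'', and metric dualization on $T\,\text{Sym}^d(X)$ versus $\Omega_{\text{Sym}^d(X)}$) are compatible, and the need for a relative argument over $\text{Sym}^d(X)\times X$ to make the fibrewise identification holomorphic in $\mathbf{x}$.
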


\begin{proof}
We will show that the homomorphisms $\alpha'$, $\beta'$, $\delta'_{\mathbf x}$ and $\nu'$ in \eqref{e7} are duals of the
homomorphisms $\alpha$, $\beta$, $\delta_{\mathbf x}$ and $\nu$ respectively, which are constructed in \eqref{e4}. This
actually follows from the fact that the complex in \eqref{c2} is dual of the complex in \eqref{c1}. We will
elaborate this a bit.

By Serre duality, we have
\begin{equation}\label{di}
H^1(X,\, {\mathcal O}_X)^*\,=\, H^0(X,\, K_X)\ \ \text{ and }\ \ 
H^1(X,\, {\mathcal O}_X(D_{\mathbf x}))^*\,=\, H^0(X,\, K_X\otimes {\mathcal O}_X(-D_{\mathbf x}))\, .
\end{equation}
Using these isomorphisms, the homomorphism $\nu'$ in \eqref{e7} is the dual of the homomorphism $\nu$ in \eqref{e4}.
Therefore, from \eqref{e7} and \eqref{e4} we have
\begin{equation}\label{e9}
H^0(X,\, Q(\mathbf{x}))^*\,=\, H^0(X,\, Q'({\mathbf x}))\, .
\end{equation}
But $H^0(X,\, Q(\mathbf{x}))\,=\, T_{\mathbf x}\text{Sym}^d(X)$ (see \eqref{y1}). On the other hand,
$H^0(X,\, Q'({\mathbf x}))$ is the fiber of $V$ over the point $\rho({\mathbf x})\, \in\, {\mathbb G}$.
Therefore, the first statement of the theorem follows from \eqref{e9}.

The isomorphism $H^1(X,\, {\mathcal O}_X)^*\,=\, H^0(X,\, K_X)$
in \eqref{di} is an isometry, because the Hermitian
form on $H^1(X,\, {\mathcal O}_X)^*$ is defined using the Hermitian form on $H^0(X,\, K_X)$ in \eqref{z2} and
this isomorphism. This implies that the isomorphism in \eqref{e9} is an isometry, after
$H^0(X,\, Q(\mathbf{x}))$ (respectively, $H^0(X,\, Q'({\mathbf x}))$) is equipped with the
Hermitian structure obtained from the Hermitian structure on $H^1(X,\, {\mathcal O}_X)$
(respectively, $H^0(X,\, K_X)$) using \eqref{e4} (respectively, \eqref{e7}). This completes the proof.
\end{proof}

Take $d\, =\,1$. Consider the K\"ahler form $\varphi^*\omega_0$ on $X$ in Corollary \ref{cor1}. Let
$\Theta\cdot \varphi^*\omega_0$ be the curvature of $\varphi^*\omega_0$; so $\Theta$ is a smooth
function on $X$.

\begin{proposition}\label{prop1}\mbox{}
The curvature function $\Theta$ is nonpositive.
\begin{enumerate}
\item If $X$ is not hyperelliptic, then $\Theta$ is strictly negative everywhere on $X$.

\item If $X$ is hyperelliptic, then $\Theta$ is strictly negative everywhere outside the $2(g+1)$
points fixed by hyperelliptic involution of $X$. The function $\Theta$ vanishes on the $2(g+1)$
fixed points of the hyperelliptic involution.
\end{enumerate}
\end{proposition}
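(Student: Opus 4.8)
The plan is to specialize Theorem~\ref{thm1} to $d\,=\,1$ and read off the curvature of $\varphi^*\omega_0$ from the standard curvature of the tautological line bundle on $\mathbb{G}\,=\,{\rm Gr}(1,H^0(X,K_X))\,=\,\mathbb{P}(H^0(X,K_X)^*)$. By Theorem~\ref{thm1}, the holomorphic line bundle $\Omega_X$ with the Hermitian metric induced by $\varphi^*\omega_0$ is isometric to $\rho^*V$, where $\rho\,:\,X\,\longrightarrow\,\mathbb{G}$ is the map of \eqref{e8} and $V$ carries the quotient metric $H_0$ coming from \eqref{z2}. When $d\,=\,1$, the map $\rho$ is nothing but the canonical map $X\,\longrightarrow\,\mathbb{P}(H^0(X,K_X)^*)$: indeed $Q'(\mathbf x)\,=\,K_X/(K_X\otimes\mathcal{O}_X(-x))$ is the fiber $(K_X)_x$, so $H^0(X,Q'(\mathbf x))$ is the one-dimensional quotient of $H^0(X,K_X)$ given by evaluation at $x$, which is precisely the hyperplane defining the canonical image. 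Thus $\rho^*V$ is the restriction to the canonical curve of the tautological (hyperplane) quotient line bundle.

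Next I would compute the curvature. On $\mathbb{P}(W)$ with $W$ a Hermitian vector space, the tautological quotient line bundle $Q_{\mathbb{P}}$ (dual of $\mathcal{O}(-1)$) carries the metric $H_0$ whose curvature is the Fubini--Study form; more relevantly, for any holomorphic map $f\,:\,C\,\longrightarrow\,\mathbb{P}(W)$ from a Riemann surface, the pullback $f^*Q_{\mathbb{P}}$ with the pulled-back metric has curvature equal to $f^*\omega_{FS}$ plus a correction measuring the failure of $f$ to be an immersion — and here $\rho$ is an immersion (the canonical map is an immersion exactly when $X$ is not hyperelliptic, and on a hyperelliptic curve it fails to be an immersion precisely at the Weierstrass points). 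Concretely, writing the metric on $\rho^*V$ in terms of an orthonormal basis $\{\theta_1,\dots,\theta_g\}$ of $H^0(X,K_X)$, the fiber metric of $\varphi^*\omega_0$ on $\Omega_X$ at $x$ is $\big(\sum_i |\theta_i(x)|^2\big)^{-1}$ in a suitable trivialization, where $|\theta_i(x)|^2$ is taken against a local coordinate; the curvature is then $-\partial\bar\partial\log\big(\sum_i |\theta_i|^2\big)$, and a direct calculation (the same one that appears in the standard proof that a subvariety of projective space inherits a metric whose curvature is bounded above by Fubini--Study) shows this equals $-\big|\text{second fundamental form of }\rho\big|^2\,-\,(\text{Fubini--Study contribution})$, which is manifestly nonpositive, and strictly negative wherever $\rho$ is immersive.

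The remaining point is to identify the zero locus. The curvature $\Theta$ vanishes at $x$ iff the corresponding quadratic term — the second-order part of the Taylor expansion of the canonical map at $x$ — vanishes, i.e.\ iff $\rho$ fails to be an immersion at $x$, equivalently $\dim H^0(X,K_X\otimes\mathcal{O}_X(-2x))\,=\,g-1$ rather than $g-2$. By Riemann--Roch this says $\dim H^0(X,\mathcal{O}_X(2x))\,=\,2$, i.e.\ $2x$ moves in a pencil, which for $g\,\geq\,2$ forces $X$ hyperelliptic with $x$ a ramification point of the hyperelliptic map; there are exactly $2g+2\,=\,2(g+1)$ such points by Riemann--Hurwitz. (If $X$ is not hyperelliptic, $h^0(\mathcal{O}_X(2x))\,=\,1$ for every $x$, so $\Theta<0$ everywhere.) I expect the main obstacle to be the bookkeeping in the curvature computation: correctly matching the quotient metric $H_0$ on $V$ with the cotangent metric under the isometry of Theorem~\ref{thm1}, and writing the second-fundamental-form term so that its vanishing locus is transparently the set where $h^0(2x)\,\geq\,2$; once the metric is written as $\big(\sum|\theta_i|^2\big)^{-1}$ against a coordinate, the rest is the classical Fubini--Study curvature decreasing property applied to the canonical embedding/map.
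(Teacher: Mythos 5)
Your proposal follows essentially the same route as the paper: use Theorem~\ref{thm1} to identify $(\Omega_X,\varphi^*\omega_0)$ with $(\rho^*V,\rho^*H_0)$, recognize $\rho$ as the canonical map for $d=1$, and conclude that the curvature form equals $-\rho^*\mu$ where $\mu$ is the (positive) Fubini--Study curvature of the tautological quotient bundle, so that $\Theta\leq 0$ with equality exactly where $d\rho$ vanishes. Your identification of the degeneracy locus via $h^0(\mathcal{O}_X(2x))=2$ and Riemann--Roch/Riemann--Hurwitz is a correct (and slightly more self-contained) substitute for the paper's citation of the standard facts that the canonical map is an embedding for non-hyperelliptic $X$ and factors as a double cover of a rational normal curve ramified at the $2(g+1)$ Weierstrass points in the hyperelliptic case.

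One conceptual slip should be corrected, though it does not affect the conclusion. You assert that for a holomorphic map $f\colon C\to\mathbb{P}(W)$ the pulled-back quotient bundle with pulled-back metric ``has curvature equal to $f^*\omega_{FS}$ plus a correction measuring the failure of $f$ to be an immersion,'' and later that $-\partial\bar\partial\log\bigl(\sum_i|\theta_i|^2\bigr)$ decomposes as $-|\mathrm{II}|^2$ minus a Fubini--Study contribution. Neither is right: the Chern curvature of a pulled-back Hermitian line bundle is \emph{exactly} the pullback of the curvature, with no correction term, so $-\partial\bar\partial\log\bigl(\sum_i|\theta_i|^2\bigr)=-\rho^*\omega_{FS}$ on the nose. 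The second fundamental form enters the Gauss equation comparing the intrinsic curvature of an induced metric on a submanifold with the ambient holomorphic sectional curvature, which is a different computation from the one needed here. Fortunately the explicit formula you write is already $-\rho^*\omega_{FS}$, and the Cauchy--Schwarz argument (or positivity of $\omega_{FS}$) shows directly that it is nonpositive and vanishes precisely where $d\rho=0$; so if you delete the second-fundamental-form language the argument is complete and agrees with the paper's.
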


\begin{proof}
Take a complex vector space $W$ equipped with a Hermitian form. Let ${\mathbb P}(W)$ be
the projective space that parametrizes quotients of $W$ of dimension one. The curvature of the
Chern connection, \cite[p.~11, Proposition 4.9]{Ko},
on the tautological line bundle on ${\mathbb P}(W)$
coincides with the Fubini--Study K\"ahler form on ${\mathbb P}(W)$. In other words, the curvature
form is positive. Let $\mu$ denote the curvature of the line bundle
$V\, \longrightarrow\, {\mathbb G}\, =\, {\mathbb P}(H^0(X,K_X))$ in \eqref{z3} equipped with the
Hermitian form $H_0$ constructed in \eqref{z4}. Since $\mu$ is positive, $\rho^*\mu$ is
nonnegative, and it is strictly positive wherever
the differential $d\rho$ is nonzero. Note that the curvature of the Chern connection on the
line bundle $\rho^*V$ equipped with the Hermitian structure $\rho^*H_0$ coincides with the
pulled back form $\rho^*\mu$.

If $X$ is not hyperelliptic, then $\rho$ is an embedding \cite[p.~11--12]{ACGH}, \cite[p.~247]{GH}.
So $\rho^*\mu$ is a positive form
on $X$. Since the curvature of the Chern connection on $(\rho^*V,\, \rho^*H_0)$ coincides with
$\rho^*\mu$, from Theorem \ref{thm1} it follows that the curvature form
$\Theta\cdot \varphi^*\omega_0$, for the K\"ahler structure $\varphi^*\omega_0$,
coincides with $-\rho^*\mu$. So $\Theta$ is strictly negative everywhere on $X$.

Now take $X$ to be hyperelliptic. Let $\iota\, :\, X\, \longrightarrow\, X$ be the hyperelliptic involution. The map $\rho$ factors as
$$ X\, \longrightarrow\, X/\iota \, \stackrel{\rho'}{\longrightarrow}\, {\mathbb P}(H^0(X,K_X))\, , $$
and $\rho'$ is an embedding \cite[p.~11]{ACGH}. In particular, the differential $d\rho$ vanishes exactly on the $2(g+1)$ points of $X$
fixed by the hyperelliptic involution $\iota$. Hence the above 
argument gives that the function $\Theta$ vanishes exactly on the $2(g+1)$ points of
$X$ fixed by $\iota$, and it strictly negative outside these $2(g+1)$ points.
\end{proof}


\end{document}